\newtheorem{theorem}{Theorem}
\newtheorem{lemma}[theorem]{Lemma}
\newtheorem{corollary}[theorem]{Corollary}
\newtheorem{remark}[theorem]{Remark}
\theoremstyle{definition}
\newcommand{\A}{\mathrm A}
\newcommand{\C}{\mathrm C}
\newcommand{\V}{\mathrm V}
\newcommand{\E}{\mathrm E}
\newcommand{\Z}{\mathrm Z}
\newcommand{\id}{{\rm id}}
\newcommand{\ZZ}{\mathbb{Z}}
\newcommand{\RR}{\mathbb{R}}
\renewcommand{\wr}{\mathop{\rm wr}}
\newcommand{\rank}{\mathrm{rank}}
\newcommand{\fpr}{\mathrm{fpr}}
\newcommand{\fx}{\mathrm{fix}}
\newcommand{\fxr}{\mathrm{rfx}}
\newcommand{\Ker}{\mathrm{Ker}}
\newcommand{\Fix}{\mathrm{Fix}\,}
\newcommand{\Aut}{\mathrm{Aut}}
\newcommand{\Alt}{\mathrm{Alt}}
\newcommand{\Sym}{\mathrm{Sym}}
\begin{document}

\title{On fixity of arc-transitive graphs}

\author[F. Lehner]{Florian Lehner}
\address{Florian Lehner, Institute of Discrete Mathematics, Graz University of Technology, \newline
Steyrergasse 30, 8010 Graz, Austria}\email{f.lehner@tugraz.at}

\author[P. Poto\v{c}nik]{Primo\v{z} Poto\v{c}nik}
\address{Primo\v{z} Poto\v{c}nik,\newline Faculty of Mathematics and Physics,
University of Ljubljana, Slovenia}\email{primoz.potocnik@fmf.uni-lj.si}

\author[P. Spiga]{Pablo Spiga}
\address{Pablo Spiga, Dipartimento di Matematica Pura e Applicata, University of Milano-Bicocca,\newline
 Via Cozzi 55, 20126 Milano, Italy}\email{pablo.spiga@unimib.it}

\thanks{The first author was supported by the Austrian Science Fund (FWF), project W1230-N13}
\thanks{The second author was supported in part by Slovenian Research Agency, programme P1-0294 and project J1-1691.}
\thanks{This article will appear in SCIENCE CHINA Mathematics}

\subjclass[2000]{20B25}
\keywords{permutation group, fixity, minimal degree, graph, automorphism group, vertex-transitive, arc-transitive} 

\maketitle

\begin{abstract}
The relative fixity of a permutation group is the maximum proportion of the points fixed by a non-trivial element of the group and the relative fixity of a graph is the relative fixity of its automorphism group, viewed as a permutation group on the vertex-set of the graph. We prove in this paper that the relative fixity of connected $2$-arc-transitive graphs of a fixed valence tends to $0$ as the number of vertices grows to infinity. We prove the same result for the class of arc-transitive graphs of a fixed prime valence, and more generally, for any class of arc-transitive locally-$L$ graphs, where $L$ is a fixed quasiprimitive graph-restrictive permutation group.
\end{abstract}


\section{Introduction}\label{intro}

For a permutation group $G$ acting on a finite set $\Omega$ and an element $g\in G$,
 let $\Fix_\Omega(g)= \{\omega\in \Omega : \omega^g = \omega\}$ be
 the set of fixed points of $g$, let 
  $$
 \fpr_\Omega(g) := \frac{|\Fix_\Omega(g)|}{|\Omega|}
$$
 be the {\em fixed-point-ratio of $g$}, and let
 $$
 \fx_\Omega(G) := \max\{\Fix_\Omega(g)| : g \in G, g\not =\id_\Omega\}\>\> \hbox{ and }\>\>
 \fxr_\Omega(G) := \max\{\fpr_\Omega(g) : g \in G, g\not =\id_\Omega\} = \frac{\fx_\Omega(G)}{|\Omega|}
 $$
 be the {\em fixity} and the {\em relative fixity} of $G$, respectively.
 
Bounding the fixity of permutation groups
has a long history, going back to a classical result of Jordan 
who proved that for every constant $c$ apart form  a finite list of exceptions (depending on $c$), every primitive permutation group $G\le \Sym(\Omega)$ not containing $\Alt(\Omega)$ satisfies $\fx_\Omega(G) \le |\Omega| - c$.
This result was later improved by several authors, such as Babai \cite{Bab},
Liebeck and Saxl \cite{LS}, Saxl and Shalev \cite{SaxSha}, and Guralnick and Magaard \cite{GurMag}, for example. 
Their work, among other results, amounts to a complete understanding of all primitive groups $G\le \Sym(\Omega)$ with $\fxr_\Omega(G) > 1/2$.
There are several results giving bounds on the fixity of transitive actions of almost simple groups (see, for example, \cite{Tim1,La,LS}), but in general, not much is known about fixity of imprimitive permutation groups.

A natural relaxation of the primitivity condition appears in the theory of
groups acting on graphs. Let $G$ be a transitive
permutation group acting on a finite set $\Omega$ and let $\omega\in \Omega$.
An orbit of the stabiliser $G_\omega$ in the set $\Omega\setminus\{\omega\}$
is then called a {\em suborbit} of $G$. Given a suborbit $\Sigma$
one can construct a so called {\em directed orbital graph} whose vertex-set is $\Omega$
and the set of directed edges is $\{(\omega^g,\sigma^g):g\in G,\sigma\in \Sigma\}$.
If this set of directed edges is invariant under the operation of interchanging the
points in each ordered pair, then the suborbit is called {\em self-paired} and
the directed orbital graph can be viewed as an undirected graph (called simply an {\em orbital graph}) upon which
the group $G$ acts as a group of automorphisms acting transitively on the
{\em arcs} (ordered pairs of adjacent vertices).


A remarkable observation of Donald Gordon Higman \cite{DonHig} asserts that $G$
 acts primitively on $\Omega$ if and only if each of its suborbits
 yields a connected directed orbital graph.
With the existing results on fixity of primitive permutation groups in mind it
is now natural to ask to what extent these results carry over to the case where
at least one directed orbital graph is connected. 
The class of permutation groups having such a suborbit is still too wide for
any meaningful upper bound on the fixity. For example, the imprimitive
wreath product of $C_n\wr\Sym(m)$ acting on the Cartesian product $\Omega$ of a set of size $n$ and a set of size $m$ has a (non-self paired) suborbit of size $m$ yielding
a connected directed orbital graph; namely, the lexicographic product of a directed cycle of length $n$ with an edgeless graph on $m$ vertices, in which the fixity is  $|\Omega| - 2$.

As we shall show in this paper, the situation changes if a suborbit $\Sigma$ corresponding to a stabiliser $G_\omega$ in a transitive permutation group $G\le \Aut(\Omega)$
is self-paired and satisfies additional conditions either on its length
and/or on the permutation group $G_\omega^\Sigma$ induced by the action of $G_\omega$ on $\Sigma$.
The main result of the paper is the following (recall that the socle of a group is the subgroup generated by all minimal normal subgroups):

\begin{theorem}
\label{thm:1}
For every positive real number $\alpha$ there exists a constant $c_\alpha$ with the following property.
Let $G$ be a transitive permutation group acting on a finite set $\Omega$,
$|\Omega| > c_\alpha$,
admitting a self-paired suborbit $\Sigma$ yielding a connected orbital graph, such that at least one of the following holds:
\begin{enumerate}
\item the cardinality of $\Sigma$ is a prime number; 
\item $G_\omega^\Sigma$ is doubly-transitive;
\item $G_\omega^\Sigma$ is primitive with its socle acting regularly
 (i.e., $G_\omega^\Sigma$ is primitive of affine or twisted wreath type).
 \end{enumerate}
Then $\fxr_\Omega(G) < \alpha$.
\end{theorem}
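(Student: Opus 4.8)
Throughout we regard the valence $d:=|\Sigma|$ as a fixed parameter (matching the ``fixed valence'' of the abstract), so that the constant we produce is allowed to depend on $\alpha$ and $d$; write $L:=G_\omega^\Sigma$. By the discussion above, the orbital graph $\Gamma$ determined by $\Sigma$ is a connected graph of valence $d$ on which $G$ acts arc-transitively with local action $L$, and $\fxr_\Omega(G)$ is the quantity to be bounded. We first dispose of~(1): if $|\Sigma|=p$ is prime then $L$ is a transitive permutation group of prime degree, hence primitive, and by Burnside's theorem on transitive groups of prime degree either $L$ is $2$-transitive or $L\le\mathrm{AGL}(1,p)$; in the latter case $\soc(L)\cong C_p$ acts regularly. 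Thus (1) is subsumed by (2) and (3), and it suffices to treat those.

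In cases (2) and (3) the group $L$ is \emph{graph-restrictive}: in (3) because it is primitive with regular socle (Spiga), and in (2) because it is $2$-transitive --- the affine case being an instance of (3), the almost simple case following from the work of Trofimov and Weiss. So there is a constant $c(L)$ with $|G_\omega|\le c(L)$ for every connected arc-transitive graph with local action $L$; since only finitely many permutation groups of the fixed degree $d$ are of the types allowed by (1)--(3), we obtain a single bound $|G_\omega|\le b=b(d)$. The chain $G_\omega=G_\omega^{[0]}\ge G_\omega^{[1]}\ge\cdots$ of pointwise stabilisers of balls in $\Gamma$ decreases strictly until it reaches $1$ (an equality $G_\omega^{[i]}=G_\omega^{[i+1]}$ forces $G_\omega^{[i]}=1$, by connectedness of $\Gamma$ and faithfulness of $G$ on $\Omega$), so $G_\omega^{[\ell]}=1$ with $\ell:=\lfloor\log_2 b\rfloor$. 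Hence no non-trivial $g\in G$ fixes a ball of radius $\ell$ in $\Gamma$; in particular $\Fix_\Omega(g)$ contains no such ball, so every vertex of $\Omega$ lies within distance $\ell$ of a vertex moved by $g$. A routine count, layer by layer from the boundary of $\Fix_\Omega(g)$ inward (using that $\Fix_\Omega(g)$ and its complement are $\langle g\rangle$-invariant and that $g$ preserves distances from its fixed vertices), already gives $\fxr_\Omega(G)\le 1-\eta$ for some $\eta=\eta(d)>0$; the substance of the theorem is to improve $1-\eta$ to an arbitrarily small $\alpha$ once $|\Omega|$ is large.

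To do this we argue by contradiction. If the theorem failed for some $\alpha>0$, there would be connected arc-transitive graphs $\Gamma_i$ of valence $d$ with $|\Omega_i|\to\infty$ and $1\ne g_i\in G_i\le\Aut(\Gamma_i)$ with $\fpr_{\Omega_i}(g_i)\ge\alpha$, and with $|G_{i,\omega}|\le b$. The ``local picture'' at a vertex --- the amalgam $G_{i,\omega}\ge G_{i,\{\omega,\omega'\}}$, together with the integer $j_i<\ell$ for which $g_i\in G_{i,\omega}^{[j_i]}\setminus G_{i,\omega}^{[j_i+1]}$ --- takes only finitely many values, so after passing to a subsequence it is constant. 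Passing to a Benjamini--Schramm limit of the graphs $\Gamma_i$ decorated by $g_i$ (with root uniform in $\Omega_i$) yields an infinite, connected, locally finite graph $\Gamma_\infty$ with the same local action $L$, carrying an automorphism $g_\infty$ which is non-trivial --- almost surely the limiting root, like every vertex of every $\Gamma_i$, lies within distance $\ell$ of a moved vertex --- and which fixes a set of density at least $\alpha$ (by unimodularity of the limit, the probability that the root is fixed equals $\lim_i\fpr_{\Omega_i}(g_i)$). It remains to show this is impossible; this is the main obstacle. The required fact is that \emph{a non-trivial automorphism of an infinite connected locally finite graph with graph-restrictive local action fixes a vertex set of density $0$} --- a statement that genuinely uses graph-restrictiveness (not merely boundedness of the valence, nor semiprimitivity): such an automorphism may well fix an infinite, even path-like, set of vertices, and the conclusion fails without the hypothesis. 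One establishes it by passing to the tree covering $T_d\to\Gamma_\infty$, on which the deck group $D$ acts freely, observing that $\Fix(g_\infty)$ pulls back to a $D$-invariant union of fixed-subtrees of the elliptic lifts of $g_\infty$, and using graph-restrictiveness of $L$ to control the sizes and distribution of those subtrees tightly enough that their image in $\Gamma_\infty=T_d/D$ has density $0$ --- the contradiction that completes the proof.
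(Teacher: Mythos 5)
Your reduction of case (1) to cases (2) and (3) via Burnside's theorem is fine, as is the use of graph-restrictiveness to get a uniform bound $|G_\omega|\le b(d)$ and the standard ball-stabiliser argument showing that every vertex lies within bounded distance of a vertex moved by $g$ (which indeed yields $\fxr_\Omega(G)\le 1-\eta(d)$). But the heart of the theorem --- pushing $1-\eta$ down to an arbitrary $\alpha$ --- is exactly the step you do not prove. Your compactness argument reduces everything to the assertion that \emph{a non-trivial automorphism of the Benjamini--Schramm limit fixes a set of density $0$}, and you yourself flag this as ``the main obstacle''; the one-sentence sketch that follows (``using graph-restrictiveness of $L$ to control the sizes and distribution of those subtrees tightly enough that their image in $T_d/D$ has density $0$'') is not an argument. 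There are two concrete problems. First, the statement is not even well-posed as written: ``density'' of a vertex set in an infinite connected locally finite graph has no intrinsic meaning, and to make sense of it you must work with the unimodular random rooted limit together with a limit of the acting groups $G_i$ (not just of the decorated graphs), which requires a local-global/ultralimit setup you have not set up. Second, even once formulated correctly, the density-zero claim is essentially equivalent to the uniform quantitative statement you are trying to prove, so asserting it is circular; any proof of it would have to supply the quantitative input that your proposal is missing.

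For comparison, the paper's proof is entirely finite and quantitative and proceeds along a different axis. One passes to $G=\langle g^X\rangle$, the normal closure of $g$; a double-counting argument (Lemma~\ref{lem:3}) gives $\fpr_\Omega(g)\le |G_\omega|\,|\C_G(g)|/|G|$, and since $G/\Z(G)$ acts faithfully by conjugation on the class $g^X$, one gets $|\C_G(g)|/|G|\le |X:G|\,f(|G:\Z(G)|)$ with $f\to 0$ (Lemma~\ref{lem:class}). The complementary estimate $|G:\Z(G)|\ge F(|G|)$ with $F$ unbounded (Lemma~\ref{lem:4}) comes from bounding $\exp(\Z(G))$ by $2|G:\Z(G)|$ via the transfer-type homomorphism $x\mapsto x^{|G:\Z(G)|}$ and bounding $\rank(\Z(G))$ by the Betti number of the quotient graph via covering theory. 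Combining these, $\fpr_\Omega(g)\ge\alpha$ forces $|G|$, and hence $|\V(\Gamma)|$, to be bounded in terms of $\alpha$ and $c(L)$. This is the quantitative mechanism your limit argument would need and does not contain; as it stands, your proposal has a genuine gap at its decisive step.
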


In fact, we prove a slightly more general result, namely Theorem~\ref{the:maindetail}, which is stated in the graph theoretical language in Section~\ref{sec:thm9} (where the proof of Theorem~\ref{thm:1} can also be found). The fixity of the automorphism group of a graph was, to the best of our knowledge, first studied by Babai \cite{Babai1,Babai2} and was motivated by the famous Graph Isomorphism Problem \cite{Babai3}. 
In \cite{ConTuck}, a relationship between the fixity of the automorphism group and the distinguishing number  of a graph was considered.
The fixity of the automorphism group of vertex-transitive graphs of valence at most $4$ was recently investigated in \cite{PotSpiFixicity1} and the present paper can be seen as a strengthening of the results
proved there under additional assumptions on the group of automorphisms.
Finally, we would also like to mention a very interesting line of research \cite{CHKM,KutMar} where the subgraphs induced by the sets of fixed points of automorphisms of cubic arc-transitive graphs were studied.


In what follows, we mostly use standard graph- and group-theoretical notation. In particular, a graph $\Gamma$ is determined by its (finite) vertex-set $\V(\Gamma)$ and edge-set $\E(\Gamma)$ consisting of unordered pairs of (distinct) adjacent vertices. All graphs in this paper are finite and simple. An automorphism of a graph $\Gamma$ is by definition a permutation of $\V(\Gamma)$ which, in its action on unordered pairs of elements of $\V(\Gamma)$, preserves $\E(\Gamma)$. The group of all automorphisms of $\Gamma$ is denoted $\Aut(\Gamma)$. For a vertex $v$ of $\Gamma$,  let $\Gamma(v)$ denote the neighbourhood of $v$. The image of $v$ under $g\in \Aut(\Gamma)$ is denoted by $v^g$.
For every $G\le\Aut(\Gamma)$ there are obvious induced actions of $G$ on $\E(\Gamma)$,
on the {\em arc-set} $\A(\Gamma):=\{(u,v) : \{u,v\} \in \E\Gamma\}$ of $\Gamma$ and on the
set $\A_2(\Gamma):=\{(u,v,w) : \{u,v\}, \{v,w\} \in \E\Gamma, u\not = w\}$ of {\em $2$-arcs} of $\Gamma$.
If $G$ is transitive on $\V(\Gamma)$, $\E(\Gamma)$, $\A(\Gamma)$ or $\A_2(\Gamma)$, then
$\Gamma$ is said to be $G$-vertex-transitive, $G$-edge-transitive, $G$-arc-transitive
or $(G,2)$-arc-transitive, respectively; with the reference to $G$ omitted when $G=\Aut(\Gamma)$.

For a permutation group $G$ on a set $\Omega$, let
 $\Omega/G$ denote the set of all $G$-orbits on $\Omega$
 and let $G^+$ be the group generated by all the  point-stabilisers $G_\omega$, $\omega\in \Omega$.
 Observe that $G^+$ is normal in $G$, implying that
$\Omega/G^+$ is a $G$-invariant partition of $G$. 
For a set $B\subseteq \Omega$, we let $G_B = \{g\in G : B^g=B\}$ be the set-wise stabiliser of
$B$ in $G$.
The centre of a group $G$ will be denoted by $\Z(G)$. If $g, x\in G$ we let $g^x = x^{-1}gx$,
write $g^G=\{g^x : x \in G\}$ and let $\C_G(g) = \{x \in G : g^x=g\}$. 
A permutation group $G$ is said to be {\em quasiprimitive} provided that all non-trivial normal subgroups of $G$ (including $G$ itself) are transitive.


\section{Auxiliary results}

In this section we prove a series of lemmas that are needed in the proof of Theorem~\ref{thm:1}.
Some of them are standard and have appeared elsewhere in a similar form (such as Lemma~\ref{lem:3}), while some are, to the best of our knowledge, new and can be found interesting  on its own (Lemma~\ref{lem:1}, for example).

\begin{lemma}
\label{lem:3}
If $G\le \Sym(\Omega)$ and $g\in G$, then
\begin{equation}
\label{eq:l1r1}
 |\Fix_\Omega(g)| \leq | \C_G(g)| |\Omega/G|.
\end{equation}
Furthermore, if $G$ is normal in a group acting transitively on $\Omega$ and $\omega\in\Omega$, then
\begin{equation}
\label{eq:l1r2}
 \fpr_\Omega(g) \leq \frac{\vert G_\omega \vert \, \vert \C_G(g) \vert}{\vert G \vert}.
\end{equation}
\end{lemma}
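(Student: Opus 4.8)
The plan is to prove the two inequalities separately, the first by a counting argument on fixed points and conjugates, the second by combining the first with a transitivity argument.

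For \eqref{eq:l1r1}, I would count pairs in a suitable way, or more directly, reason as follows. Let $F = \Fix_\Omega(g)$ and consider the $G$-orbits meeting $F$. The key observation is that $g$ acts on $\Omega$, and for each $G$-orbit $\Delta$, the set $\Fix_\Delta(g)$ of points of $\Delta$ fixed by $g$ is controlled by $\C_G(g)$: if $\omega,\omega'\in\Delta$ are both fixed by $g$, pick $x\in G$ with $\omega^x=\omega'$; then $g$ and $g^{x^{-1}}$ both fix $\omega$, but more usefully one shows that the number of fixed points of $g$ in a single orbit $\Delta$ is at most $|\C_G(g)|/|\C_G(g)\cap G_\omega|\le |\C_G(g)|$. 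Concretely, fix $\omega_0\in\Fix_\Delta(g)$; for any other $\omega\in\Fix_\Delta(g)$ write $\omega=\omega_0^{x}$, and then $g$ fixing $\omega$ means $\omega_0^{xg}=\omega_0^{x}$, i.e.\ $xgx^{-1}\in G_{\omega_0}$; the map sending $\omega$ to the coset $\C_G(g)x$ is well-defined and injective on $\Fix_\Delta(g)$ once one checks that $x$ is determined up to left multiplication by $G_{\omega_0}$ and that $xgx^{-1}\in G_{\omega_0}$ pins down $\C_G(g)x$ among at most $|\C_G(g)|$-many possibilities---this is the standard ``fixed points in an orbit correspond to conjugates of $g$ lying in a point stabiliser'' bijection. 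Summing over the at most $|\Omega/G|$ orbits that meet $F$ gives $|F|\le |\C_G(g)|\,|\Omega/G|$.

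For \eqref{eq:l1r2}, suppose $G\trianglelefteq H$ with $H$ transitive on $\Omega$, and fix $\omega\in\Omega$. Since $H$ is transitive and normalises $G$, it permutes the $G$-orbits transitively, so all $G$-orbits have the same size $|\omega^G| = |G|/|G_\omega|$, and hence $|\Omega/G| = |\Omega|\,|G_\omega|/|G|$. Substituting this into \eqref{eq:l1r1} and dividing by $|\Omega|$ yields
\[
\fpr_\Omega(g) = \frac{|\Fix_\Omega(g)|}{|\Omega|} \le \frac{|\C_G(g)|\,|\Omega/G|}{|\Omega|} = \frac{|G_\omega|\,|\C_G(g)|}{|G|},
\]
as required.

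The main obstacle is getting the bound in the first part cleanly right: one must be careful that the correspondence between fixed points of $g$ inside a single $G$-orbit $\Delta$ and certain cosets is genuinely injective, so that $|\Fix_\Delta(g)|\le|\C_G(g)|$ holds for \emph{every} orbit (not just on average). The cleanest way is to show that $\C_G(g)$ acts on $\Fix_\Delta(g)$ and that a point stabiliser of this action is contained in $G_\omega\cap\C_G(g)$ while the orbit of a fixed point $\omega_0$ under $\C_G(g)$ covers all of $\Fix_\Delta(g)$ lying in the $G_{\omega_0}$-translate sense; alternatively one invokes the standard fact that $g$ is conjugate in $G$ into $G_{\omega_0}$ for each fixed $\omega_0$ and counts how many $G$-conjugates of $g$ can lie in a fixed coset space. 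Once this counting step is secured, the rest is routine.
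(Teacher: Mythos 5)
Your overall plan---bound $|\Fix_\Delta(g)|$ by $|\C_G(g)|$ on each orbit $\Delta$ and sum, then derive \eqref{eq:l1r2} from \eqref{eq:l1r1} using the equality of orbit lengths---is sound, and your treatment of \eqref{eq:l1r2} is exactly the paper's. But the justifications you offer for the key per-orbit bound contain genuine errors. First, the map $\omega \mapsto \C_G(g)x$ (where $\omega = \omega_0^{x}$) is not well-defined: $x$ is determined only up to replacement by $hx$ with $h \in G_{\omega_0}$, and $\C_G(g)hx \neq \C_G(g)x$ unless $h \in \C_G(g)$. Second, both the refined bound $|\Fix_\Delta(g)| \le |\C_G(g)|/|\C_G(g)\cap G_{\omega_0}|$ and the claim that the $\C_G(g)$-orbit of $\omega_0$ covers $\Fix_\Delta(g)$ are false in general: take $G=\Alt(5)$ acting naturally on $5$ points and $g=(1\,2\,3)$; then $\Fix_\Omega(g)=\{4,5\}$ has size $2$, while $\C_G(g)=\langle g\rangle$ fixes both $4$ and $5$, so $|\C_G(g):\C_G(g)\cap G_4|=1<2$. (In general the number of $\C_G(g)$-orbits on $\Fix_\Delta(g)$ equals the number of $G_{\omega_0}$-conjugacy classes into which $g^G\cap G_{\omega_0}$ splits, which can exceed one.)

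The correct route is the one you mention only in passing at the end: count conjugates. For $\omega_0\in\Fix_\Delta(g)$, the set $A=\{x\in G : g^x\in G_{\omega_0}\}$ is a disjoint union of right cosets of $\C_G(g)$, one for each element of $g^G\cap G_{\omega_0}$, so $|A|=|g^G\cap G_{\omega_0}|\,|\C_G(g)|$; on the other hand, $A^{-1}=\{y\in G : \omega_0^{y}\in\Fix_\Delta(g)\}$ is a disjoint union of cosets $G_{\omega_0}y$, one for each point of $\Fix_\Delta(g)$, so $|A|=|A^{-1}|=|\Fix_\Delta(g)|\,|G_{\omega_0}|$. Hence
\begin{equation*}
|\Fix_\Delta(g)| \;=\; \frac{|g^G\cap G_{\omega_0}|\,|\C_G(g)|}{|G_{\omega_0}|}\;\le\;|\C_G(g)|,
\end{equation*}
and summing over the at most $|\Omega/G|$ orbits meeting $\Fix_\Omega(g)$ gives \eqref{eq:l1r1}. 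With this repair your argument coincides with the paper's proof, which performs the same double count once globally, over the set $S=\{(\delta,x) : \delta\in\Omega,\ x\in G,\ g^x\in G_\delta\}$, rather than orbit by orbit.
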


\begin{proof}
We prove the first inequality by double counting  the set
\begin{equation*}
S=\{(\delta,x)\mid \delta\in \Omega, x\in G, g^x \in G_\delta\}.
\end{equation*}
By choosing first $x$ and then $\delta$ we see that
\begin{equation}
\label{eq:l1-1}
\vert S \vert =
 \sum_{x \in G} \vert \{ \delta\in \Omega \mid g^x \in G_\delta \}
= \sum_{x \in G} \vert \Fix (g^x) \vert
= \vert \Fix(g) \vert\, \vert G \vert.
\end{equation}
On the other hand,
\begin{equation}
\label{eq:l1-2}
\vert S \vert 
= \sum_{\delta\in \Omega} \vert \{ x \in G \mid g^x \in G_\delta \} \vert.
\end{equation}
Now let $A_\delta:=\{ x \in G \mid g^x \in G_\delta \}$ and let $\varphi \colon A_\delta \to g^G\cap G_\delta$
be a function defined by $\varphi(x):=g^x$. Let $h$ be an arbitrary element of $g^G\cap G_\delta$,
and let $y\in G$ be such that $h=g^y$. Then the preimage $\varphi^{-1}(h)$ consists of elements
$x$ such that $g^x = g^y$, or equivalently, that $xy^{-1} \in \C_G(g)$. Hence $\varphi^{-1}(h) = \C_G(g)y$
and thus $|A_\delta| = |g^G\cap G_\delta|\, |\C_G(g)|$. Using (\ref{eq:l1-1}) and (\ref{eq:l1-2}), it follows that
$$
\vert \Fix(g) \vert\, \vert G \vert = |S|= \sum_{\delta\in \Omega} \vert g^G \cap G_\delta \vert \, \vert \C_G(g) \vert \le |\C_G(g)|  \sum_{\delta\in \Omega} |G_\delta| = |\C_G(g)|\, |\Omega/G|\, |G|,
$$
proving inequality (\ref{eq:l1r1}).
If $G$ is normal in a group acting transitively on $\Omega$, then all of its orbits are of equal length
and hence $|\Omega| = |\omega^G| |\Omega/G| = |G|\, |\Omega/G| / |G_\omega|$. Inequality
(\ref{eq:l1r2}) then follows by dividing inequality (\ref{eq:l1r1}) by $|\Omega|$.
\end{proof}

For a group $X$ and an element $g\in X$ let $g^X$ denote the conjugacy class of $g$ in $X$
and let $\langle g^X\rangle$ be the subgroup of $X$ generated by all the elements of $g^X$.

\begin{lemma}
\label{lem:class}
There exists a strictly decreasing function $f\colon [1,
\infty) \to \RR^+$, $\lim_{x\to \infty} f(x) = 0$,
with the following property: 
If $g$ is an element of a transitive permutation group $X\le \Sym(\Omega)$, $\omega\in \Omega$, and $G = \langle g^X\rangle$, then
$$\fpr_\Omega(g) \le |G_\omega|\, |X:G|\, f(|G:\Z(G)|).$$
\end{lemma}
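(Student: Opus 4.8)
The plan is to reduce this to Lemma~\ref{lem:3} together with a quantitative statement about how large a conjugacy class must be inside a group with a bounded-index centre. First observe that $G=\langle g^X\rangle$ is normal in $X$ (being generated by a union of $X$-conjugacy classes), and $X$ acts transitively on $\Omega$, so inequality~(\ref{eq:l1r2}) of Lemma~\ref{lem:3} applies and gives
\[
\fpr_\Omega(g) \le \frac{|G_\omega|\,|\C_G(g)|}{|G|} = |G_\omega|\cdot\frac{|\C_G(g)|}{|G|}.
\]
Wait — I need $|X:G|$ in the bound, not just $G$-data, so instead I will apply~(\ref{eq:l1r2}) with the roles set up so that the index $|X:G|$ appears: writing $|\Omega|=|X:X_\omega|$ and $G\trianglelefteq X$, one has $\fpr_\Omega(g)\le |G_\omega|\,|\C_G(g)|/|G|$, and then multiply and divide by $|X:G|$ is not needed; rather the factor $|X:G|$ enters because we want an inequality uniform in the ambient group. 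The cleanest route: apply inequality~(\ref{eq:l1r1}) to the $G$-action on $\Omega$, getting $|\Fix_\Omega(g)|\le|\C_G(g)|\,|\Omega/G|$, and bound $|\Omega/G|\le|X:G|\cdot|X_\omega:G_\omega|\le|X:G|\cdot|X_\omega|$ using that $X$ is transitive; dividing by $|\Omega|=|X:X_\omega|$ yields $\fpr_\Omega(g)\le|X_\omega|\cdot|X:G|\cdot|\C_G(g)|/(|G|\cdot\text{something})$. I will sort out the exact bookkeeping, but the upshot is that it suffices to prove
\[
\frac{|\C_G(g)|}{|G|} \le f(|G:\Z(G)|)
\]
for a suitable strictly decreasing $f$ with $f(x)\to 0$, i.e.\ that $|g^G|=|G:\C_G(g)|$ is large whenever $|G:\Z(G)|$ is large.

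So the heart of the matter is: \emph{if $G=\langle g^G\rangle$ and $|G:\Z(G)|$ is large, then $|g^G|$ is large.} This is where the hypothesis $G=\langle g^X\rangle$ is essential — it forces $g\notin\Z(G)$ unless $G$ is abelian, and more importantly it means the normal closure of $g$ is all of $G$, so $g$ cannot be ``too central''. The key step I expect to use is a standard fact: a group generated by $k$ conjugates of $g$ has order at most roughly $|g^G|^k\cdot|\C_G(g):\Z(G)|$-type bound — more precisely, if $G=\langle x_1,\dots,x_k\rangle$ with each $x_i\in g^G$, and $C=\C_G(g)$, then $G/\Z(G)$ is generated by the images of $k$ elements each lying in a coset of $C/\Z(G)$; combined with $|C:\Z(G)|\le$ (related to $|g^G|$ via $|g^G|=|G:C|$), one gets $|G:\Z(G)|\le |g^G|^{\,m}$ for some absolute exponent governed by how many conjugates are needed. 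Since any group is generated by its conjugacy class $g^G$ together with the fact that the subgroup generated by all of them is $G$, one can extract a generating set of bounded size only if $|g^G|$ is bounded — so the contrapositive gives the bound. Concretely I would prove: $|G:\Z(G)| \le |g^G|^{\,|g^G|}$ or some such explicit super-polynomial-in-disguise bound, then set $f(x):=1/r(x)$ where $r(x)$ is the least integer $n$ with $n^n\ge x$, make it strictly decreasing by a routine adjustment, and check $f(x)\to0$.

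The main obstacle is establishing the quantitative inequality relating $|G:\Z(G)|$ to $|g^G|$ under the hypothesis $G=\langle g^G\rangle$. The naive idea — ``$G/\Z(G)$ embeds in a product of copies of $g^G$'' — is false as stated, so I need a genuine argument: I would take a chain $1=G_0<G_1<\cdots<G_t=G$ where $G_{i}=\langle G_{i-1}, x_i\rangle$ with $x_i\in g^G$ chosen so that each step is proper, note $t\le\log_2|G|$, but that is circular; better, observe $t\le$ the length of a subgroup chain, and at each step the number of choices is controlled, OR use that $G=\langle g^G\rangle$ means $G$ is covered by the normal closure, and apply a result bounding $|G:\Z(G)|$ in terms of the maximal conjugacy class size (for instance via the class equation and the fact that a single large class already forces structure). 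An alternative, possibly cleaner, is to invoke: in any finite group, if every conjugacy class has size at most $m$ then $|G:\Z(G)|$ is bounded by a function of $m$ (a theorem in the spirit of B.H.\ Neumann / BFC-groups, with explicit bounds due to Vaughan-Lee, Wiegold and others) — but we only need it for the \emph{single} class $g^G$ together with $\langle g^G\rangle=G$, which is easier. I will use the elementary observation that $\C_G(g)\supseteq\Z(G)$ and that $[G,g]$, being normal and containing $g^{-1}g^x$ for all $x$, satisfies $G=\langle g\rangle[G,G]$-type relations to pin down $|G:\Z(G)|$ by a direct commutator-counting argument, producing an explicit $f$. Finally I assemble: $\fpr_\Omega(g)\le|G_\omega|\,|X:G|\cdot|\C_G(g)|/|G| = |G_\omega|\,|X:G|/|g^G| \le |G_\omega|\,|X:G|\cdot f(|G:\Z(G)|)$.
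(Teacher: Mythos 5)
Your first step is right: the lemma does reduce, via inequality~(\ref{eq:l1r2}) of Lemma~\ref{lem:3}, to showing that $|\C_G(g)|/|G|$ is small when $|G:\Z(G)|$ is large. But the heart of the argument --- the quantitative link between $|G:\Z(G)|$ and the size of the conjugacy class --- is never actually established in your proposal. You list several candidate strategies (subgroup chains, BFC-type theorems, commutator counting), reject or defer each one, and explicitly label this step ``the main obstacle.'' The paper closes this gap with a short observation you are missing: let $G$ act on the $X$-class $g^X$ by conjugation. The kernel of this action centralises every element of $g^X$, hence centralises $\langle g^X\rangle=G$, so the kernel is exactly $\Z(G)$ and $G/\Z(G)$ embeds in $\Sym(g^X)$. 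This gives $|G:\Z(G)|\le |g^X|!$ at once, and combined with $|g^X|\le |X:G|\,|G:\C_G(g)|$ one gets $|G:\Z(G)|\le (|X:G|\,|G:\C_G(g)|)!$, from which a suitable $f$ (an inverse of the factorial) can be read off.

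There is also a more substantive error in how you set up the reduction: you silently replace $g^X$ by $g^G$ and $\langle g^X\rangle$ by $\langle g^G\rangle$. The hypothesis is $G=\langle g^X\rangle$, the normal closure of $g$ in $X$; the normal closure $\langle g^G\rangle$ of $g$ in $G$ can be a proper subgroup of $G$, in which case the kernel of the conjugation action of $G$ on $g^G$ is $\C_G(\langle g^G\rangle)$, which may be much larger than $\Z(G)$. Consequently the intermediate inequality you reduce to, namely $1/|g^G|\le f(|G:\Z(G)|)$, is false in general: take $G$ extraspecial of order $p^{2n+1}$ inside a suitable $X=G\rtimes A$ with $A$ mixing the noncentral classes so that $\langle g^X\rangle=G$; then $|g^G|=p$ while $|G:\Z(G)|=p^{2n}$ is unbounded. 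The factor $|X:G|$ in the statement is not harmless slack to be inserted for uniformity, as your bookkeeping treats it --- it must enter through the group theory, via the comparison $|g^X|\le |X:G|\,|g^G|$, and the faithful action has to be taken on $g^X$, not on $g^G$. Correcting this forces you into essentially the paper's argument.
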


\begin{proof}
Observe first that $G$ is a normal subgroup of $X$.
Now consider the action of $G$ on the conjugacy class $g^X$ by conjugation. The stabiliser of
 a point $g'\in g^X$ is then the centraliser $\C_G(g')$ and the kernel $K$ of this action
 consists of all the elements of $G$ that centralise every element of $g^X$. 
 Since $G=\langle g^X\rangle$, it follows that $K=\Z(G)$, implying that
 $G/\Z(G)$ acts faithfully on $g^X$ and thus $|G/\Z(G)| \le |g^X|!$.
   On the other hand,
$$
 |g^X| = \frac{|X|}{|\C_X(g)|} =  
 |X:G|\, |G:\C_G(g)|\, \frac{|\C_G(g)|}{ |\C_X(g)|} \le  |X:G|\, |G:\C_G(g)|,
 $$
 showing that 
  $$
  |G:\Z(G)| \le (|X:G|\, |G:\C_G(g)|)!.
 $$
In particular,
by letting $f$ be the function mapping $x\in [1,\infty)$ to $ \frac{1}{\Gamma^{-1}(x-1)}$,
where $\Gamma^{-1}$ is the inverse of the Gamma function restricted to the interval $[2,\infty)$,
we see that $f$ is a strictly decreasing function satisfying
 $$
\frac{|\C_G(g)|}{|G|} \le  |X:G|\, f(|G:\Z(G)|).
 $$
The claim now follows from inequality (\ref{eq:l1r2}) of Lemma~\ref{lem:3}.
\end{proof}


\begin{lemma}
\label{lem:1}
If $G$ is a transitive permutation group acting on a finite set $\Omega$,
then: 
\begin{equation*}
\exp(G)\>\> \hbox{ divides }\>\> |G:\Z(G)|\, |\Omega/G^+|.
\end{equation*}

\end{lemma}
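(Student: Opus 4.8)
The plan is to verify the divisibility one element at a time: it suffices to show that $|g|$ divides $|G:\Z(G)|\,|\Omega/G^+|$ for every $g\in G$, and then take the least common multiple over all $g$. The first step is to dispose of the factor $|\Omega/G^+|$. Since $G^+\trianglelefteq G$ contains every point stabiliser and $G$ is transitive, the quotient $G/G^+$ acts regularly on $\Omega/G^+$; hence $|\Omega/G^+|=|G:G^+|$, and for a given $g$ the order $m$ of $gG^+$ in $G/G^+$ divides $|\Omega/G^+|$. Writing $h:=g^m$ we have $h\in G^+$ and $|g|=m\,|h|$, so it is enough to prove that $|h|$ divides $|G:\Z(G)|$; in other words, that $\exp(G^+)\mid|G:\Z(G)|$.

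This last divisibility is the heart of the matter, and I would prove it by induction on $|\Omega|$. Two ingredients are available throughout. First, the image $G^+\Z(G)/\Z(G)$ of $G^+$ in $G/\Z(G)$ has order dividing $|G:\Z(G)|$, so the order of any $h\in G^+$ modulo $\Z(G)$ divides $|G:\Z(G)|$; what remains to control is the ``central part'' $\langle h\rangle\cap\Z(G)$. Second, a non-trivial central element acts semiregularly on $\Omega$ — indeed $\Z(G)\cap G_\omega$ is a subgroup of central elements, hence normal in $G$ and contained in the kernel of the action, hence trivial — so $\Z(G)$ is itself semiregular and, in particular, $\Omega/\Z(G)$ is a non-trivial block system whenever $G$ is non-abelian. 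If $\Z(G)=1$ there is nothing to prove, since $\exp(G^+)\mid|G^+|\mid|G|=|G:\Z(G)|$; if $G$ is abelian then $G$ is regular, $G^+=1$, and again there is nothing to prove. Otherwise I would pass to the faithful transitive action of $\bar G:=G/K$ on $\mathcal B:=\Omega/\Z(G)$, where $K\supseteq\Z(G)$ is the kernel and $|\mathcal B|<|\Omega|$. Here the stabiliser in $G$ of a $\Z(G)$-orbit $B$ equals $\Z(G)G_\omega$ (for $\omega\in B$), a consequence of $\Z(G)\cap G_\omega=1$ and an order count; since $\Z(G)\le K$ this yields $\bar G^{\,+}=\overline{G^+}$. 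The inductive hypothesis gives $\exp(\bar G^{\,+})\mid|\bar G:\Z(\bar G)|$, and one then transports this bound back to $G$ by comparing $\Z(\bar G)$ and $K$ with $\Z(G)$, using the semiregularity of $\Z(G)$ to keep track of what was quotiented out. Combining $|g|=m\,|h|$ with $m\mid|\Omega/G^+|$ and $|h|\mid|G:\Z(G)|$ then gives $|g|\mid|G:\Z(G)|\,|\Omega/G^+|$, and hence the lemma.

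The main obstacle is precisely the divisibility $\exp(G^+)\mid|G:\Z(G)|$: one must make sure that no prime power of $\exp(G^+)$ is lost, even though $G/\Z(G)$ only ``sees'' $h$ modulo $\Z(G)$ and an element $h\in G^+$ may have a non-trivial central power. The bound is sharp — for $G=\SL_2(3)$ acting on the $8$ cosets of a Sylow $3$-subgroup one has $G^+=G$, $\Z(G)\cong \mathrm C_2$ and $\exp(G^+)=12=|G:\Z(G)|$ — so the inductive step has to be organised so that a possibly larger centre in the quotient $\bar G$ is always compensated exactly by the central kernel $K$ that has been factored out.
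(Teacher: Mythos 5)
Your reduction is sound and matches the skeleton of the paper's argument: $G/G^+$ acts regularly on $\Omega/G^+$, so everything comes down to the single divisibility $\exp(G^+)\mid |G:\Z(G)|$, and your $\SL_2(3)$ example correctly shows this bound is sharp. The problem is that you never actually prove that divisibility. Your proposed induction on $|\Omega|$ breaks at exactly the step you leave vague (``one then transports this bound back to $G$\dots''). Concretely: the kernel $K$ of the action of $G$ on $\Omega/\Z(G)$ can be much larger than $\Z(G)$ and can contain all of $G^+$. Take $G=D_4$ acting on the four vertices of a square: $\Z(G)=\langle r^2\rangle$ has order $2$, the blocks are the two diagonals, and the kernel $K$ of the action on the two blocks is the Klein group generated by the two diagonal reflections, which is precisely $G^+$. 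The inductive hypothesis applied to $\bar G=G/K$ then says only that $\exp(\overline{G^+})=1$ divides $|\bar G:\Z(\bar G)|$, which carries no information about $\exp(G^+)$; all of the content has been absorbed into $\exp(G^+\cap K)$, for which you give no bound. Since $|\bar G:\Z(\bar G)|$ only drops by the factor $|K|\,|\Z(\bar G)|/|\Z(G)|$ while the loss is $\exp(G^+\cap K)$, there is no visible mechanism forcing $\exp(G^+\cap K)\cdot\exp(\overline{G^+})$ to divide $|G:\Z(G)|$, and you would need a genuinely new idea to close this.

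The paper avoids the induction entirely with one algebraic fact: if $t=|G:\Z(G)|$, then the power map $\tau\colon G\to \Z(G)$, $x\mapsto x^t$, is a group homomorphism (a Schur/transfer-type result; \cite[Corollary 7.48]{Rot}). Since $\Z(G)\cap G_\omega$ is normal in $G$ and fixes a point, it is trivial, so $G_\omega$ embeds into $G/\Z(G)$ and $|G_\omega|$ divides $t$; hence every $G_\omega$ lies in $\Ker(\tau)$, hence so does $G^+$, which is exactly the statement $\exp(G^+)\mid t$. You correctly isolated the key claim and the relevant observation that $\Z(G)$ is semiregular, but without the power-map homomorphism (or some substitute for it) your argument does not go through.
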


\begin{proof}
Observe first that $G^+$ is a normal subgroup of $G$, implying that $\Omega/G^+$ is a $G$-invariant partition of $\Omega$. 
Let $\omega$ be an arbitrary element of $\Omega$ and let $B=\omega^{G^+}$ be its $G^+$-orbit. 
Note that $(G_B)_\omega = G_\omega \le G^+$.
Since $G^+$, in its action on $B$, is a transitive subgroup of $G_B$, this implies that
$G_B = G^+ (G_B)_\omega = G^+$. Since $G_B$ is the stabiliser of the element $B$ in the
induced action of $G$ on $\Omega/G^+$, this implies that
 the kernel of this action is $G^+$, and that the induced faithful action of $G/G^+$ on $\Omega/G^+$ is semiregular.
 In particular, $|G/G^+|$ divides  $|\Omega/G^+|$.

Since the stabiliser $\Z(G)_\omega$ of a vertex $\omega\in \Omega$ is a normal subgroup of $G$ contained in $G_\omega$, it follows that $\Z(G)_\omega =1$.
Let $t = |G:\Z(G)|$ and let
 \begin{equation*}
 \tau \colon G \to \Z(G), \quad x \mapsto x^t.
\end{equation*} 
Then $\tau$ is a well-defined group homomorphism (see for example \cite[Corollary 7.48]{Rot}).
Let $K = \Ker(\tau)$. Since $|\Z(G)G_\omega/\Z(G)| = |G_\omega|$, we see that the order of $G_\omega$ divides
$t$, and hence $G_\omega$ is a subgroup of $K$, implying that $G^+\le K$. Hence $\exp(N)$ divides $t$. 
The result now follows by the fact that $\exp(G)$ divides $\exp(G^+)\exp(G/G^+)$ and that $|G:G^+|$ divides $|\Omega/G^+|$.
\end{proof}

A group $G$ acts {\em semiregularly} on a set $\Omega$ provided that $G_\omega =1$ for
every $\omega\in \Omega$.
We call the cardinality of a smallest generating set of a group $G$ the {\em rank} of $G$ and
denote it by $\rank(G)$. The following lemma can be proved in many ways and we choose to use the tools from the theory of graph covers as
described in \cite{CovRef} (or see \cite{ElAbCov} for a more succinct explanation of the theory).

\begin{lemma}
\label{lem:cover}
If $\Gamma$ is a connected graph and $G$ a group of automorphisms of $\Gamma$ acting
semiregularly on $\V(\Gamma)$, then $\rank(G) \le |\E(\Gamma)/G| - |\V(\Gamma)/G| +1$. 
\end{lemma}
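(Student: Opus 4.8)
The plan is to realise $\Gamma$ as a regular cover of the quotient $\Delta := \Gamma/G$ with covering (voltage) group $G$, and then to extract a generating set of $G$ of the required size from a voltage assignment on $\Delta$. First I would form $\Delta=\Gamma/G$: its vertices are the $G$-orbits on $\V(\Gamma)$ and its edges the $G$-orbits on $\E(\Gamma)$, so that $|\V(\Delta)|=|\V(\Gamma)/G|$ and $|\E(\Delta)|=|\E(\Gamma)/G|$, and $\Delta$ is connected since $\Gamma$ is. Because $G$ acts semiregularly on $\V(\Gamma)$, the natural projection $\wp\colon\Gamma\to\Delta$ is a regular covering projection with $G$ as its group of covering transformations, in the sense of \cite{CovRef}; here $\Delta$ should be allowed to carry loops and semiedges, the latter arising exactly from the $G$-orbits of edges that are inverted by some element of $G$. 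By the standard correspondence between regular covers and voltage assignments (\cite{CovRef}, or \cite{ElAbCov}), there is then a voltage assignment $\zeta$ on the arcs of $\Delta$ with values in $G$ such that $\Gamma\cong\Delta\times_\zeta G$.

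Next I would use connectedness. The derived cover $\Delta\times_\zeta G$ is connected if and only if the voltages of the closed walks of $\Delta$ based at a fixed vertex generate $G$, so in our situation this local voltage group is all of $G$. Fix a spanning tree $T$ of $\Delta$; a spanning tree uses only ordinary edges, so $|\E(T)|=|\V(\Delta)|-1$. After replacing $\zeta$ by an equivalent voltage assignment that is trivial on $T$, a routine computation with fundamental closed walks (through $T$ to one end of a co-tree edge, across that edge, and back through $T$) shows that the local voltage group is already generated by the single voltage $\zeta(e)$ of each edge $e\in\E(\Delta)\setminus\E(T)$ --- with $\zeta(e)$ an involution when $e$ is a semiedge. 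Hence $G$ is generated by the $|\E(\Delta)|-|\E(T)| = |\E(\Gamma)/G|-|\V(\Gamma)/G|+1$ elements $\zeta(e)$, $e\in\E(\Delta)\setminus\E(T)$, which yields the claimed bound on $\rank(G)$.

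The step needing most care is the bookkeeping in the presence of semiedges, that is, the case where some element of $G$ inverts an edge of $\Gamma$: one must check that the quotient, the covering-theoretic correspondence, the notion of a spanning tree and the count $|\E(T)|=|\V(\Delta)|-1$ all remain valid there, rather than only in the cleaner situation where $G$ also acts semiregularly on the arc-set. Once the covering formalism of \cite{CovRef} is set up to include semiedges this is routine, and everything else is the classical spanning-tree argument for voltage graphs; alternatively one can bypass the covering language altogether by lifting a spanning tree of $\Delta$ to a subtree of $\Gamma$ meeting each $G$-orbit on $\V(\Gamma)$ exactly once and reading the generators directly off the $|\E(\Gamma)/G|-|\V(\Gamma)/G|+1$ edges of $\Gamma$ not equivalent to a tree edge, checking by a connectivity argument that they generate $G$.
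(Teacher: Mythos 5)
Your proposal is correct and follows essentially the same route as the paper: both pass to the quotient graph (with loops and semiedges), view $\wp\colon\Gamma\to\Gamma/G$ as a regular covering projection with covering group $G$, and use surjectivity of the voltage homomorphism from the fundamental group (forced by connectedness of $\Gamma$) together with the spanning-tree/Betti-number count to bound $\rank(G)$ by $|\E(\Gamma)/G|-|\V(\Gamma)/G|+1$. The only cosmetic difference is that the paper bounds $\rank(G)$ via $\rank(\pi(\Gamma/G,b))\le m+\ell$ abstractly, whereas you exhibit the cotree-edge voltages as explicit generators; these are the same argument.
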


\begin{proof}
For the purpose of this proof we shall use a more general notion of a graph, namely
one that allows parallel edges, loops and even semiedges (see \cite[Section 3]{CovRef} or \cite[Section 2.1]{ElAbCov} for exact definitions).
Let $\Gamma':=\Gamma/G$ be the quotient graph of $\Gamma$ with respect to $G$
as defined in \cite[Section 2.2]{ElAbCov}.
What follows mimics the classical approach of the theory of covers of topological
spaces with a small modification which is needed due to the possible existence of semiedges in the $\Gamma'$ which arise from the edge-reversing elements in $G$.

 Since $G$ acts semiregularly on $\V(\Gamma)$
the corresponding quotient projection $\wp_G \colon \Gamma \to \Gamma'$ is a
{\em regular covering projection}. The group of covering transformations (which is defined as 
the group of automorphisms of $\Gamma'$ preserving each fibre $\wp_G^{-1}(x)$
where $x$ is either a vertex or a dart of $\Gamma'$) then equals the group $G$.
By the definition of the quotient graph, we have $\V(\Gamma') = \V(\Gamma)/G$
and $\E(\Gamma')=\E(\Gamma)/G$.

Let $\pi(\Gamma',b)$ be the fundamental group based at a vertex
$b$ of $\Gamma$, as defined in \cite[Section 3]{CovRef} (or \cite[Section 2.1]{ElAbCov}). 
Then (see  \cite[Section 3]{CovRef}) $\pi(\Gamma',b)$ is isomorphic to the free product
of $m$ copies of $\ZZ_2$ (where $m$ equals the number of semiedges in $\Gamma'$)
and $\ell$ copies of $\ZZ$. Moreover, $m+\ell$ equals the Betti number of $\Gamma'$,
which equals the number of cotree edges in $\Gamma'$ with respect to an arbitrary spanning
tree of $\Gamma'$. Hence $\rank(\pi(\Gamma';b)) \le m+\ell = |\E(\Gamma')| - |\V(\Gamma')| +1$.

Furthermore, using the procedure described in \cite[Section 2.3]{ElAbCov}, one can find a homomorphism $\zeta\colon \pi(\Gamma',b) \to G$ (called the {\em voltage assignment})
which allows one to reconstruct the graph $\Gamma$ from $\Gamma'$, $G$ and $\zeta$
as the derived covering graph with respect to the {\em locally transitive Cayley voltage space} $(N;\zeta)$. One can easily see that the derived covering graph is connected if and only if
the corresponding homomorphism $\zeta\colon \pi(\Gamma',b) \to G$ is surjective.
Since $\Gamma$ is assumed to be connected, this then implies that
$\rank(G) \le \rank(\pi(\Gamma';b))$ and the result follows.
\end{proof}

If a group of automorphisms $G$ of a graph $\Gamma$ is such that the vertex-stabiliser $G_v$ is transitive on the neighbourhood $\Gamma(v)$ for every
$v\in \V(\Gamma)$, then $\Gamma$ is said to be {\em $G$-locally-arc-transitive}. If $\Gamma$ is a connected $G$-locally-arc-transitive graph, then the group
$G^+$ has index at most $2$ in $G$ and has at most $2$ orbits on $\V(\Gamma)$. If $\Gamma$ is bipartite, then
the orbits of $G^+$ coincide with the parts of the bipartition of $\Gamma$, and if $\Gamma$ is not bipartite, then $G^+=G$ and $G^+$ is arc-transitive.

\begin{corollary}
\label{cor:1}
Let $\Gamma$ be a finite connected $G$-locally arc-transitive graph not isomorphic to a complete bipartite graph, such that $G$ acts faithfully on each of its orbits.
Let $\epsilon = 1$ whenever $\Gamma$ is bipartite and $G$-arc-transitive, and let $\epsilon=0$ otherwise.
Then $\exp(G)$ divides $2^\epsilon |G:Z(G)|$.
\end{corollary}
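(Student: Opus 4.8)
The plan is to push everything onto the normal subgroup $G^{+}=\langle G_v : v\in\V(\Gamma)\rangle$ generated by the vertex stabilisers: first bound $\exp(G^{+})$, and then count the cosets of $G^{+}$ in $G$. The first (and most substantial) step is to show that $\exp(G^{+})$ divides $|G:\Z(G)|$. Put $t=|G:\Z(G)|$ and let $\tau\colon G\to\Z(G)$, $x\mapsto x^{t}$, which is the homomorphism used in the proof of Lemma~\ref{lem:1}. I would check that $G_v\le\Ker\tau$ for every $v\in\V(\Gamma)$: since $\Z(G)$ is central, $\Z(G)_v=\Z(G)\cap G_v$ is invariant under conjugation, hence coincides with $\Z(G)_{v^{g}}$ for all $g\in G$, so it fixes the whole $G$-orbit of $v$ pointwise; as $G$ acts faithfully on that orbit by hypothesis, $\Z(G)_v=1$. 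Consequently $|G_v|=|\Z(G)G_v:\Z(G)|$ divides $t$, so every element of $G_v$ is annihilated by $\tau$, i.e.\ $G_v\le\Ker\tau$. Since $\Ker\tau$ is a subgroup and $G^{+}$ is generated by the subgroups $G_v$, we get $G^{+}\le\Ker\tau$, and therefore $\exp(G^{+})$ divides $t$. (This is exactly the heart of the proof of Lemma~\ref{lem:1}, but extracted orbitwise rather than for a single transitive action.)

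Next I would determine $|G:G^{+}|$ in terms of $\epsilon$, using the structural facts recalled just before the statement. If $\Gamma$ is not bipartite, then $G^{+}=G$ and $\epsilon=0$, so $|G:G^{+}|=1$. If $\Gamma$ is bipartite, then $G^{+}$ has exactly the two parts of the bipartition as its orbits; if $G$ preserves both parts setwise then $G$ has the same two orbits (each $G$-orbit contains a $G^{+}$-orbit and lies inside one part), and the orbit–stabiliser relation gives $[G:G_v]=[G^{+}:G_v]$ for a vertex $v$, forcing $G=G^{+}$ — and in this subcase $\Gamma$ is not $G$-vertex-transitive, hence not $G$-arc-transitive, so $\epsilon=0$ and $|G:G^{+}|=1$ again. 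In the remaining case $G$ interchanges the two parts $\Delta_1,\Delta_2$; then $G$ is vertex-transitive and $|G:G^{+}|=2$, and moreover $\Gamma$ is $G$-arc-transitive: $G^{+}$ is transitive on the set of arcs with tail in $\Delta_1$ (its vertex stabilisers equal those of $G$ and act transitively on neighbourhoods), while any element of $G\setminus G^{+}$ carries this set onto the set of arcs with tail in $\Delta_2$; so $\epsilon=1$ and $|G:G^{+}|=2$. In every case $|G:G^{+}|$ divides $2^{\epsilon}$.

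Finally I would combine the two steps. For every $g\in G$ one has $g^{\exp(G/G^{+})}\in G^{+}$, hence $g^{\exp(G/G^{+})\exp(G^{+})}=1$; thus $\exp(G)$ divides $\exp(G/G^{+})\exp(G^{+})$, which divides $|G:G^{+}|\cdot|G:\Z(G)|$, which in turn divides $2^{\epsilon}|G:\Z(G)|$, as required.

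The point I expect to be the real obstacle is precisely the bipartite, non-arc-transitive case in the second step: there $G$ is \emph{not} transitive on $\V(\Gamma)$, so one cannot simply quote the inequality of Lemma~\ref{lem:1} with $\Omega=\V(\Gamma)$ without paying a spurious factor $|\V(\Gamma)/G^{+}|=2$ (nor with $\Omega$ equal to a single part, which would cost $|\Delta_i/G^{+}_{\Delta_i}|$). The resolution is that the bound $\exp(G^{+})\mid|G:\Z(G)|$ obtained in the first step is insensitive to the number of orbits; so it is cleaner to isolate $\exp(G^{+})$ first and only afterwards bring in the index $|G:G^{+}|$, which equals $1$ exactly when $\epsilon=0$ and at most $2$ otherwise.
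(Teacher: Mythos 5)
Your proof is correct. It is, at bottom, the same argument as the paper's: the paper proves the corollary by applying Lemma~\ref{lem:1} to the faithful transitive action of $G$ on a suitable orbit $\Omega$ (all of $\V(\Gamma)$ when $G$ is vertex-transitive, one part of the bipartition otherwise), and the factor $2^\epsilon$ enters there as the orbit count $|\Omega/G^+|$. Every ingredient you use --- the triviality of $\Z(G)_v$, the power map $\tau\colon x\mapsto x^{|G:\Z(G)|}$ into the centre, the containments $G_v\le\Ker(\tau)$ and hence $G^+\le\Ker(\tau)$, and the divisibility $\exp(G)\mid\exp(G^+)\exp(G/G^+)$ --- is taken from the proof of Lemma~\ref{lem:1}; what you change is the packaging: you bound $\exp(G^+)$ once, for the (possibly intransitive) action on all of $\V(\Gamma)$, and then account for the factor $2^\epsilon$ through the index $|G:G^+|$ rather than through $|\Omega/G^+|$. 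This buys a small amount of robustness: since your bound on $\exp(G^+)$ uses the stabilisers of all vertices simultaneously, you never have to restrict to a single part of the bipartition, and so you sidestep the question (implicit in the paper's first case, where Lemma~\ref{lem:1} is invoked for the action on one part) of whether the stabilisers of the vertices of that part alone already generate a group transitive on it. Your case analysis of $|G:G^+|$ against $\epsilon$ is complete and correct, including the observation that a connected bipartite graph whose parts are both preserved by $G$ cannot be $G$-arc-transitive.
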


\begin{proof}
Let $\Omega$ be an orbit of $G$ in its action on $\V(\Gamma)$. 
Suppose first that $\Omega \not = \V(\Gamma)$.
Then $\Gamma$ is bipartite, $\Omega$ is a part of the bipartition of $\Gamma$ and $G=G^+$.
In particular, $\epsilon = 0$ and $|\Omega/G^+| = 1$. By assumption, the action of $G$ on $\Omega$ is faithful and hence $G$
can be viewed as a transitive permutation group of $\Omega$.
By Lemma~\ref{lem:1}, it follows that $\exp(G)$ divides $|G:\Z(G)|$.

Suppose now that $\Omega = \V(\Gamma)$. Then $G$ is arc-transitive and $G^+$ has  at most $2$ orbits on $\Omega$.
Lemma~\ref{lem:1} then yields that $\exp(G)$ divides $2|G:\Z(G)|$. Moreover, if $\epsilon = 0$, then $\Gamma$ not bipartite,
and thus $|\Omega/G^+| = 1$. But then $\exp(G)$ divides $|G:\Z(G)|$, as claimed.
\end{proof}

If $\Gamma$ is a connected graph and $G\le\Aut(\Gamma)$ such that for every vertex $v\in \V(\Gamma)$
the group $G_v^{\Gamma(v)}$ is quasiprimitive (and thus transitive), then we say that $\Gamma$ is $G$-locally quasiprimitive.
Note that such a graph is automatically $G$-locally-arc-transitive.
The following lemma is folklore, but for the sake of completeness we provide the proof.

\begin{lemma}
\label{lem:lqp}
Let $\Gamma$ be a connected $G$-locally quasiprimitive graph. If $G$ acts unfaithfully on one of its
orbits on $\V(\Gamma)$, then $\Gamma$ is a complete bipartite graph.
\end{lemma}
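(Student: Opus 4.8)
The plan is to work with the kernel $K$ of the action of $G$ on the orbit $\Omega$ on which $G$ acts unfaithfully, so that $1\neq K\trianglelefteq G$ and $K$ fixes $\Omega$ pointwise. First I would dispose of the case $\Omega=\V(\Gamma)$: then $K$ fixes every vertex of $\Gamma$, so $K=1$, a contradiction. Since $\Gamma$ is $G$-locally quasiprimitive it is $G$-locally-arc-transitive, and hence (as noted above) $G^+$, and therefore also $G$, has at most two orbits on $\V(\Gamma)$; thus $G$ has exactly two orbits, $\Gamma$ is bipartite, and $\Omega$ is one of its two parts, the other being $\Omega':=\V(\Gamma)\setminus\Omega$.

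Next, fix $v\in\Omega$. As $K$ fixes $v$ we have $K\le G_v$, and normality of $K$ in $G$ gives $K^{\Gamma(v)}\trianglelefteq G_v^{\Gamma(v)}$; since $G_v^{\Gamma(v)}$ is quasiprimitive, $K^{\Gamma(v)}$ is either trivial or transitive. A short conjugation argument (using that $G$ is transitive on $\Omega$ and $K\trianglelefteq G$, so that for $v^g=u$ one has $\Gamma(u)=\Gamma(v)^g$ and $K^g=K$) shows this dichotomy is uniform over $v\in\Omega$. If $K^{\Gamma(v)}=1$ for all such $v$, then $K$ fixes $\bigcup_{v\in\Omega}\Gamma(v)=\Omega'$ pointwise — here I use that $\Gamma$ is connected and bipartite, so every vertex of $\Omega'$ has a neighbour in $\Omega$ — whence $K=1$, a contradiction. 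So $K^{\Gamma(v)}$ is transitive for every $v\in\Omega$.

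The heart of the argument is then the following. Fix $w\in\Omega'$ and put $M:=w^K\subseteq\Omega'$ and $N:=\Gamma(w)\subseteq\Omega$. For any $v\in N$ we have $K\le G_v$, so $K$ preserves $\Gamma(v)$ and acts transitively on it, and $w\in\Gamma(v)$; hence $\Gamma(v)=w^K=M$. Thus every vertex of $N$ is adjacent to precisely the vertices of $M$. Conversely, if $w'\in M$, write $w'=w^k$ with $k\in K$; then $\Gamma(w')=\Gamma(w)^k=N^k=N$, because $k$ fixes $\Omega\supseteq N$ pointwise. Hence every vertex of $M$ is adjacent to precisely the vertices of $N$. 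Consequently $N\cup M$ spans a complete bipartite subgraph of $\Gamma$ with no edge of $\Gamma$ leaving it, and connectedness of $\Gamma$ forces $\V(\Gamma)=N\cup M$ and $\Gamma\cong\mathrm K_{|N|,|M|}$, as claimed.

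I expect the main subtlety to be the bookkeeping around normality and the transitivity dichotomy — in particular, making precise that $K^{\Gamma(v)}$ being trivial (resp.\ transitive) for one $v\in\Omega$ forces the same for all $v\in\Omega$, and the observation that the neighbourhood $\Gamma(v)=w^K$ does not depend on the choice of neighbour $v$ of $w$. Everything else is routine once the bipartite structure has been extracted.
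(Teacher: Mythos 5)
Your proof is correct and follows essentially the same route as the paper: both arguments take the kernel $K$ of the action on $\Omega$, use normality of $K$ in $G_v$ together with quasiprimitivity of $G_v^{\Gamma(v)}$ to show $K$ acts transitively on the neighbourhood of a vertex in $\Omega$, and then observe that the $K$-orbit of a vertex in the other part, together with its (constant) neighbourhood, forms a complete bipartite component, which by connectedness is all of $\Gamma$. The only cosmetic difference is that the paper picks a single vertex moved by $K$ and works locally from there, whereas you establish the trivial-or-transitive dichotomy uniformly over all of $\Omega$ first; both are fine.
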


\begin{proof}
Let $\Omega$ be an orbit of $G$ in its action on $\V(\Gamma)$. If the action of $G$ on $\Omega$ is not faithful, then $\V(\Gamma)\not = \Omega$ and hence $\Gamma$ is bipartite and $\Omega$ is one of the two sets of the bipartition with the other set of the bipartition being the second orbit of $G$. Let $K$ be the kernel of the action of 
$G$ on $\Omega$. Since $K\not =1$, there is a vertex $u$ such that $u^K\not = \{u\}$.
Let $v$ be a neighbour of $u$. Since $K$ is a normal subgroup of $G_v$ and since $K$ acts non-trivially on $\Gamma(v)$, it follows that $K$ is transitive on $\Gamma(v)$ and hence $\Gamma(v) = u^K$.
But then $\Gamma(u') = \Gamma(u)$ for every $u'\in \Gamma(v)$. Consequently, the neighbourhood $\Gamma(v')$ of every vertex $v' \in \Gamma(u)$ contains $\Gamma(v)$ and since $v$ and $v'$ are in the
same $G$-orbit, this implies that $\Gamma(v)=\Gamma(v')$. This shows  that every walk starting in $v$ never leaves the set $\Gamma(v) \cup \Gamma(u)$. Since $\Gamma$ is connected, this implies that
$\V(\Gamma) = \Gamma(v) \cup \Gamma(u)$ and thus
$\Gamma$ is complete bipartite.
\end{proof}

\begin{lemma}
\label{lem:4}
There exists an unbounded strictly increasing function $F\colon \RR^+ \to \RR^+$ such that for every connected $G$-locally-quasiprimitive graph $\Gamma$ not isomorphic to a complete bipartite graph the following inequality holds:
$$
|G:\Z(G)| \ge F(|G|).
$$
\end{lemma}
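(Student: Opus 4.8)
The plan is to prove that $|G|$ is bounded above by an explicit, strictly increasing function $h$ of $m:=|G:\Z(G)|$; the statement then follows by letting $F$ be a suitable strictly increasing, unbounded function built from $h^{-1}$, so that $F(|G|)\le F(h(m))\le m=|G:\Z(G)|$. Writing $Z=\Z(G)$ we have $|G|=m\,|Z|$, so it suffices to bound $|Z|$ in terms of $m$; and since $Z$ is abelian, $|Z|\le\exp(Z)^{\rank(Z)}$, so it is enough to bound $\exp(Z)$ and $\rank(Z)$ in terms of $m$.

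First I would record some structure. Since $\Gamma$ is not complete bipartite, Lemma~\ref{lem:lqp} gives that $G$ acts faithfully on each of its (at most two) orbits on $\V(\Gamma)$. A central element fixing a vertex $v$ then fixes the entire $G$-orbit of $v$ — it commutes with every automorphism moving $v$ around — and hence, by the faithfulness just noted, is trivial; so $Z$ acts \emph{semiregularly} on $\V(\Gamma)$ and $G_v\cap Z=1$ for every vertex $v$. In particular $|G_v|\le|G/Z|=m$, and since $G_v$ is transitive on $\Gamma(v)$ the degree of $v$ divides $|G_v|$, so every vertex of $\Gamma$ has degree at most $m$. For the exponent, I would invoke Corollary~\ref{cor:1}: its hypotheses are met ($\Gamma$ is finite, connected, $G$-locally-arc-transitive, not complete bipartite, with $G$ faithful on each orbit), so $\exp(G)$ divides $2^{\epsilon}m$ with $\epsilon\in\{0,1\}$, and therefore $\exp(Z)\le\exp(G)\le 2m$.

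To bound $\rank(Z)$, note that $Z$ is a group of automorphisms of the connected graph $\Gamma$ acting semiregularly on $\V(\Gamma)$, so Lemma~\ref{lem:cover} gives $\rank(Z)\le|\E(\Gamma)/Z|-|\V(\Gamma)/Z|+1$. By orbit–stabiliser and semiregularity, each $G$-orbit $\Omega$ on $\V(\Gamma)$ satisfies $|\Omega/Z|=|\Omega|/|Z|=m/|G_v|\le m$ for $v\in\Omega$, so $|\V(\Gamma)/Z|\le 2m$. Since $\Gamma$ is simple and $Z$ is semiregular on vertices, the stabiliser in $Z$ of an edge has order at most $2$, so every $Z$-orbit on $\E(\Gamma)$ has length at least $|Z|/2$; hence $|\E(\Gamma)/Z|\le 2|\E(\Gamma)|/|Z|=\bigl(\sum_{v\in\V(\Gamma)}\deg(v)\bigr)/|Z|\le m\,|\V(\Gamma)|/|Z|=m\,|\V(\Gamma)/Z|\le 2m^{2}$, where I used that every degree is at most $m$. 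Hence $\rank(Z)\le 2m^{2}+1$.

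Combining the two estimates, $|Z|\le(2m)^{2m^{2}+1}$, so $|G|=m\,|Z|\le m\,(2m)^{2m^{2}+1}=:h(m)$; this $h$ is strictly increasing and unbounded on the positive integers, so one may take $F\colon\RR^{+}\to\RR^{+}$ to be any strictly increasing unbounded function with $F(h(m))\le m$ for all integers $m\ge 1$ — for instance a continuous strictly increasing extension of $h^{-1}$ to $[h(1),\infty)$, prolonged linearly on $(0,h(1)]$ — and then $F(|G|)\le|G:\Z(G)|$ as required. The step I expect to be the genuine obstacle is the bound on $\rank(Z)$: there is no formal reason for the rank of a central subgroup to be controlled by $|G:\Z(G)|$, and the argument hinges both on the covering-space machinery packaged in Lemma~\ref{lem:cover} and on the less obvious point that local quasiprimitivity, via $G_v\cap Z=1$, forces the degrees of $\Gamma$ to be at most $m$. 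By contrast, bounding $\exp(Z)$ is routine once Corollary~\ref{cor:1} is available, and the remaining steps are bookkeeping.
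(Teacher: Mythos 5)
Your proof is correct and follows essentially the same route as the paper: write $|G|=|G:\Z(G)|\,|Z|$, bound $\exp(Z)$ via Corollary~\ref{cor:1} and $\rank(Z)$ via Lemma~\ref{lem:cover}, then invert the resulting bound. The only (harmless) divergence is in estimating $|\E(\Gamma)/Z|$: the paper simply notes that $G/Z$ acts transitively on $\E(\Gamma)/Z$, giving $|\E(\Gamma)/Z|\le|G:Z|$, whereas your degree-sum argument gives the weaker but equally serviceable bound $2|G:Z|^{2}$.
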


\begin{proof}
Let $\Gamma$ be a connected $G$-locally-quasiprimitive graph not isomorphic to a complete bipartite 
graph and let $Z = \Z(G)$. Since 
\begin{equation}
\label{eq:GZ}
|G| = |G:Z| \, |Z|,
\end{equation}
it suffices to bound $|Z|$ above in terms of $|G:Z|$.
Since $\Gamma$ is not a complete bipartite graph, it follows from Lemma~\ref{lem:lqp} that $G$ acts faithfully on each of its orbits.
By Corollary~\ref{cor:1}, it follows that $\exp(G) \le 2 |G:Z|$, and since $\exp(Z) \le \exp(G)$, we see that
\begin{equation}
\label{eq:exponent}
\exp(Z) \leq 2|G:Z|.
\end{equation} 

We will now establish an upper bound on the rank of $Z$. 
Since $G$ acts faithfully on each of its orbits, the vertex-stabiliser $Z_v$ is trivial for every $v\in \V(\Gamma)$ and thus Lemma~\ref{lem:cover} applies. In particular, 
\begin{equation*}
\label{eq:exponent1}
\rank(Z) \le |\E(\Gamma)/Z| - |\V(\Gamma)/Z| + 1 \le |\E(\Gamma)/Z|.
\end{equation*}
Furthermore,
since $G$ acts transitively on  $\E(\Gamma)$, it follows that $G/Z$ acts transitively on $\E(\Gamma)/Z$, implying that
\begin{equation}
\label{eq:exponent2}
\rank(Z) \le |\E(\Gamma)/Z| \le |G:Z|.
\end{equation}
By combining (\ref{eq:GZ}), (\ref{eq:exponent}) and (\ref{eq:exponent2}),  we thus obtain:
\begin{equation}
\label{eq:exponent3}
|G| = |G:Z| \, |Z| \leq |G:Z| \, \exp(Z)^{\rank (Z)} \leq |G:Z| \, (2 |G:Z|)^{|G:Z|}.
\end{equation}
Let $F$ be the inverse of the (strictly increasing and bijective) function
$\RR^+ \to \RR^+$, 
$x\mapsto x (2x)^x$. The result now follows by applying the function $F$ on
both sides of the inequality (\ref{eq:exponent3}).
\end{proof}

\section{Application to arc-transitive graphs}
\label{sec:thm9}

In this section we formulate and prove the main result of this paper (from which Theorem~\ref{thm:1} follows easily). The formulation of the theorem is rather technical and uses the notion of locally-quasiprimitive group actions on graphs (introduced by Cheryl Praeger in \cite{Pqp}), and the notion of graph-restrictive permutation groups (introduced by Gabriel Verret in \cite{Verret}), which can be defined as follows.

Let $\Gamma$ be a  $G$-vertex-transitive graph.
If the group $G_v^{\,\Gamma(v)}$, induced by the action of the vertex-stabiliser $G_v$ on $\Gamma(v)$, is permutation isomorphic to some permutation group $L$,
 then we say that $G$ is {\em locally-$L$}. 
Similarly, if $G_v^{\,\Gamma(v)}$ is a quasiprimitive permutation group, then we say that $G$ is {\em locally quasiprimitive}.
Following \cite{Verret}, we say that a transitive permutation group $L$ is {\em graph-restrictive} 
provided there exists a constant $c = c(L)$ such that whenever $G$ is an arc-transitive, locally $L$ group 
of automorphisms of a graph $\Gamma$, the order of the stabiliser $G_v$ is at most $c(L)$.

\begin{theorem}
\label{the:maindetail}
For every quasiprimitive and graph-restrictive permutation group $L$
and every positive constant $\alpha$ there exists an integer $N_{L,\alpha}$ with
the following property:
If $\Gamma$ is a connected $X$-arc-transitive graph with $|V(\Gamma)| > N_{L,\alpha}$ and if  $X_v^{\Gamma(v)}$ is permutation isomorphic to $L$ for every vertex $v$,
then
$$
\fpr_{\V(\Gamma)}(g) < \alpha
$$
 for every nontrivial element $g$ of $X$. 
\end{theorem}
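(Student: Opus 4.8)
The plan is to estimate $\fpr_{\V(\Gamma)}(g)$ for a nontrivial $g\in X$ by combining Lemma~\ref{lem:class} with the structural results on locally-quasiprimitive graphs (Corollary~\ref{cor:1}, Lemma~\ref{lem:4}). First fix the data: $L$ is quasiprimitive and graph-restrictive with constant $c=c(L)$, so every $X$-arc-transitive locally-$L$ graph $\Gamma$ has $|X_v| \le c$, hence $|X| = |X_v|\,|\V(\Gamma)| \le c\,|\V(\Gamma)|$. Now take $g\in X$, $g\ne \id$, and put $G = \langle g^X\rangle$, a nontrivial normal subgroup of $X$. Applying Lemma~\ref{lem:class} with $\Omega = \V(\Gamma)$ gives
$$
\fpr_{\V(\Gamma)}(g) \le |X_v|\,|X:G|\, f\bigl(|G:\Z(G)|\bigr) \le c\,|X:G|\,f\bigl(|G:\Z(G)|\bigr),
$$
with $f$ strictly decreasing and $f(x)\to 0$. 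So it suffices to show that as $|\V(\Gamma)|\to\infty$ (uniformly over all admissible $\Gamma,X,g$), either $|G:\Z(G)|\to\infty$ fast enough to kill the factor $|X:G|$, or some other mechanism forces the bound below $\alpha$.

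The heart of the argument is to control $|X:G|$ and $|G:\Z(G)|$ via the orbits of $G$. Since $G\trianglelefteq X$ and $X$ is arc-transitive, $G$ acts on $\Gamma$ with at most two orbits on vertices (if $G$ is vertex-transitive then $\Gamma$ is $G$-arc-transitive or at least $G$-edge-transitive; otherwise the two $G$-orbits are the two sides of a bipartition and $\Gamma$ is $G$-locally-arc-transitive), and one checks that $\Gamma$ is $G$-locally-$L'$ for $L' = G_v^{\Gamma(v)}$ a nontrivial normal subgroup of $L$, hence transitive since $L$ is quasiprimitive; so $\Gamma$ is $G$-locally-quasiprimitive and $G$-locally-arc-transitive. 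I would first dispose of the case $\Gamma \cong K_{m,m}$ separately — there the order of $\V(\Gamma)$ growing forces $m\to\infty$, and a direct computation of $\fx$ of the automorphism group of a complete bipartite graph handles it (and in any case a complete bipartite graph of large valence is not locally-$L$ for fixed $L$, so this case is essentially vacuous once $|\V(\Gamma)| > $ something). Assuming $\Gamma \not\cong K_{m,m}$, Lemma~\ref{lem:lqp} gives that $G$ acts faithfully on each of its orbits, so Corollary~\ref{cor:1} applies to $G$ and yields $\exp(G) \mid 2|G:\Z(G)|$, and Lemma~\ref{lem:4} applies to $G$ and yields $|G:\Z(G)| \ge F(|G|)$ for an unbounded strictly increasing $F$. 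Combining these: $|G| \le F^{-1}(|G:\Z(G)|)$, and since $|X| \le c\,|\V(\Gamma)|$ while $|X:G| = |X|/|G|$, we get $|X:G| \le c\,|\V(\Gamma)|/|G|$. This is not yet enough on its own, so the key extra input is a lower bound on $|G|$ in terms of $|\V(\Gamma)|$: because $G$ is normal in the arc-transitive group $X$ and (in the non-bipartite case) vertex-transitive or (in the bipartite case) transitive on each side, $|G| \ge |\V(\Gamma)|/2$, so $|X:G| \le 2c$ is \emph{bounded}. Then
$$
\fpr_{\V(\Gamma)}(g) \le 2c^2\, f\bigl(|G:\Z(G)|\bigr) \le 2c^2\, f\bigl(F(|G|)\bigr) \le 2c^2\, f\bigl(F(|\V(\Gamma)|/2)\bigr),
$$
and since $f\circ F \to 0$, choosing $N_{L,\alpha}$ so that $2c^2 f(F(N_{L,\alpha}/2)) < \alpha$ finishes the proof. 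One technical point to verify carefully is that $G = \langle g^X\rangle$ really does act faithfully on each orbit and is genuinely large ($|G| \ge |\V(\Gamma)|/2$): faithfulness follows from Lemma~\ref{lem:lqp} once we know $\Gamma\not\cong K_{m,m}$, and the size bound follows since a nontrivial normal subgroup of $X$ intersected with the (at most $2$) vertex orbits must be transitive on each — if it were intransitive on an orbit $\Omega$, its orbit partition on $\Omega$ would be $X$-invariant and $G$ would lie in the kernel on $\Omega$, contradicting faithfulness unless $G$ is trivial there, which is impossible for $G = \langle g^X\rangle \ne 1$.

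The main obstacle I anticipate is the case analysis around bipartiteness and the precise behaviour of $G^+$ versus $G$: Lemma~\ref{lem:4} and Corollary~\ref{cor:1} are stated for locally-quasiprimitive graphs and one must check $\Gamma$ together with the (possibly smaller) group $G$ still fits those hypotheses — in particular that $G_v^{\Gamma(v)}$ is nontrivial, which uses that $g$ acts nontrivially somewhere and $G = \langle g^X\rangle$ is normal, so if $G_v$ fixed $\Gamma(v)$ pointwise for all $v$ then by connectedness $G_v$ would be trivial and $G$ semiregular, but a normal semiregular subgroup of an arc-transitive group on a connected graph that is nontrivial still acts transitively on each orbit, so faithfulness would force $G$ cyclic-ish of order $\le 2$ and one handles that degenerate case by hand (it cannot fix a large fraction of vertices). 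Apart from this bookkeeping, everything is a direct substitution into the lemmas already proved, and deducing Theorem~\ref{thm:1} from Theorem~\ref{the:maindetail} is then routine: the three hypotheses (1)–(3) each guarantee that the suborbit group $G_\omega^\Sigma$ is quasiprimitive and graph-restrictive (prime-length transitive groups are primitive hence quasiprimitive, and graph-restrictive by Verret's work; doubly-transitive groups likewise; primitive affine or twisted-wreath-type groups likewise), so one applies Theorem~\ref{the:maindetail} to the orbital graph.
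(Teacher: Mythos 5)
Your proposal is correct and follows essentially the same route as the paper: set $G=\langle g^X\rangle$, use normality together with quasiprimitivity of $L$ to make $\Gamma$ $G$-locally-quasiprimitive, exclude the complete bipartite case, and combine Lemma~\ref{lem:class} with Lemma~\ref{lem:4} after bounding $|G_v|$ and $|X:G|$ by constants depending only on $c(L)$. The only cosmetic differences are that the paper bounds $|X:G|\le c$ by direct orbit counting rather than via $|G|\ge|\V(\Gamma)|/2$, and it secures the nontriviality of $G_v^{\Gamma(v)}$ by choosing a vertex $v$ fixed by $g$ on whose neighbourhood $g$ acts nontrivially; your semiregular degenerate case is in fact even easier than you suggest, since a semiregular $G$ forces $g$ to fix no vertex at all, giving $\fpr_{\V(\Gamma)}(g)=0$.
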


\begin{remark}
\label{rem:remark}
{\rm
Determining which transitive permutation groups are graph-restrictive
is a classical topic in algebraic graph theory, going back to William Tutte, who showed in  \cite{tutte} that the symmetric group of degree $3$ is graph-restrictive with the corresponding constant being $48$.
Similarly, it can be deduced from the work of  Anthony Gardiner~\cite{Gard} that the alternating 
group $A_4$ and the symmetric group $S_4$ (both of degree $4$) are
graph restrictive, with corresponding constants $c(A_4) = 36$ and $c(S_4)=2^{4\,}3^{6}$. 
In  \cite[Conjecture 3.12]{weissconj}, Richard Weiss conjectured that every primitive permutation group is graph-restrictive.
Weiss'  conjecture was later strengthened by Cheryl Praeger, conjecturing that every quasiprimitive permutation group is graph-restrictive. Even though both these conjectures are still open, one can deduce from
the work of Richard Weiss and Vladimir Trofimov that
 every doubly transitive group is graph-restrictive. 
The proof of this fact can be found by putting together pieces from many papers, 
but a nice summary is given in the introduction to a later paper by Weiss \cite{dt}.
Together with another result of Richard Weiss \cite{weissp}, this also implies that
every permutation group of prime degree is graph-restrictive. 
In \cite{CGT} and \cite{twist}, the third-named author of this paper proved that every primitive permutation group
of affine type or of twisted wreath type is graph restrictive 
(recall that a primitive permutation group is of affine type provided that it contains a non-trivial abelian normal subgroup and of twisted wreath type if its socle is non-abelian and acts regularly);
in short, primitive permutation group whose socle (group generated by all minimal normal subgroups) acts regularly on the points are graph-restrictive.

Other examples of graph-restrictive groups can be found in \cite{GiuMor1,GiuMor2,Verret,Verret2}, 
and a summary of all (at that time) known graph-restrictive groups  is given in \cite{PSV}. 
However, to deduce Theorem~\ref{thm:1} from Theorem~\ref{the:maindetail}, all that needs to
be remembered is that doubly-transitive permutation groups, primitive permutation groups of affine or twisted wreath type and transitive permutation groups of prime degree are graph-restrictive.
}
\end{remark}


The rest of the section is devoted to proving Theorem~\ref{the:maindetail},
and to a deduction of Theorem~\ref{thm:1} from Theorem~\ref{the:maindetail}.
Let $L$ be a quasiprimitive and graph-restrictive permutation group.
If the degree of $L$ (and thus the valence of $\Gamma$)
is $1$ or $2$, then the result clearly holds. We may thus assume that
the degree of $L$ is at least $3$.

Let $\alpha >0$ and
let $\Gamma$ be a connected $X$-arc-transitive graph 
with $X_v$ permutation isomorphic to $L$, satisfying
\begin{equation}
\label{eq:assump}
\fpr_{\V(\Gamma)}(g)\ge \alpha
\end{equation}
 for some $g\in X \setminus \{1_X\}$.
 Let $c:=c_L$ be the constant associated with
graph-restrictive group $L$; then 
%
$|X_v|\le c.$
%

We need to show that $|\V(\Gamma)|$ is bounded above
by some constant $N$ depending only on $L$ and $\alpha$.
Without loss of generality we may assume that $\Gamma$ is
not a complete bipartite graph and moreover that $|\V(\Gamma)| \ge 1/\alpha$
which together with the assumption  $\fpr_{\V(\Gamma)}(g)\ge \alpha$ implies that 
$g$ fixes at least one vertex of $\Gamma$. Since $\Gamma$ is connected and $g$ is a nontrivial automorphism, it then follows that 
there exists a vertex $v\in \V(\Gamma)$ fixed by $g$, such
that $g$ acts nontrivially on the neighbourhood $\Gamma(v)$.

Let $G = \langle g^X\rangle$. Then $G$ is normal in $X$, implying
that $G_v$ is a normal subgroup of $X_v$. Since $g\in G_v$, it follows that
$G_v$ acts non-trivially on $\Gamma(v)$. Since $X_v^{\Gamma(v)}$ is
quasimprimitive, this implies that $G_v$ acts transitively on $\Gamma(v)$.
Moreover, since $G$ is normal in a vertex-transitive group $X$, it follows that
$G_u$ is transitive on $\Gamma(u)$ for every $u\in \V(\Gamma)$; that is, $\Gamma$
is $G$-locally arc-transitive. 
Since $G\le X$, it follows that 
\begin{equation}
\label{eq:proof01}
|G_v|\le c.
\end{equation}

 Moreover, since $G$ has at most $2$ orbits on $\V(\Gamma)$, it follows that 
\begin{equation}
\label{eq:proof02}
|X:G| = \frac{|\V(\Gamma)| |X_v|}{|v^G| |G_v|} \le 2 |X_v:G_v| \le |X_v|\le c.
\end{equation}

By Lemma~\ref{lem:4} we also see that
\begin{equation}
\label{eq:proof1}
|G:\Z(G)| \ge	F(|G|).
\end{equation}
for some fixed unbounded strictly increasing function $F\colon \RR^+ \to \RR^+$,
and by Lemma~\ref{lem:class} it follows that
\begin{equation}
\label{eq:proof2}
\fpr_{\V(\Gamma)}(g) \le |G_\omega|\, |X:G|\, f(|G:\Z(G)|)
\end{equation}
for some fixed strictly decreasing function $f\colon [1,\infty) \to \RR^+$ such that
$\lim_{x\to \infty} f(x) = 0$. Combining inequalities (\ref{eq:proof01}), (\ref{eq:proof02}), (\ref{eq:proof1}) and
(\ref{eq:proof2}), we see that
$$
\alpha\le \fpr_{\V(\Gamma)}(g) \le c^2 \varphi(|G|)
$$
where $\varphi := f \circ F$
is a strictly decreasing function such that $\lim_{x\to \infty} \varphi(x) = 0$.
By dividing by $c^2$ and applying the inverse of $\varphi$, one thus concludes that
$$
 |G| \le \varphi^{-1}(\alpha/c^2).
$$
Since $|\V(\Gamma)| \le 2 |G|/|G_v| \le |G|$ this yields an upper bound 
$N_{L,\alpha}:=\varphi^{-1}(\alpha/c^2)$ for $|\V(\Gamma)|$
which depends only on $\alpha$ and $L$. In particular, if $|V(\Gamma)|>N_{L,\alpha}$,
then the assumption (\ref{eq:assump}) must be false. Hence $|V(\Gamma)|\le N_{L,\alpha}$.
This finishes the proof of Theorem~\ref{the:maindetail}.
\smallskip

Theorem~\ref{thm:1} now follows easily from Theorem~\ref{the:maindetail} and Remark~\ref{rem:remark}. Indeed, let $\alpha$ be a positive constant, let $G$ be a transitive permutation group acting on a finite set $\Omega$,
let $\omega\in \Omega$ and let $\Sigma = \delta^{G_\omega}$ be a
 self-paired suborbit  yielding a connected orbital graph $\Gamma$.
Then $\V(\Gamma)  = \Omega$, $\E(\Gamma) = \{\{\omega^g, \delta^g\} :
g\in G\}$ and $\Gamma$ is a $G$-arc-transitive graph of valence $|\Sigma|$.
Suppose in addition that either $|\Sigma|$ is a prime number or that
the permutation group $G_\omega^{\Sigma}$
induced by the action of $G_\omega$ on $\Sigma$
 is doubly-transitive or primitive of affine type.
  Then $L:=G_\omega^{\Sigma}$ is clearly a primitive permutation group
 and in view of Remark~\ref{rem:remark}, it is also graph-restrictive.
By Theorem~\ref{the:maindetail}, there exists a constant $c_\alpha:=N_{L,\alpha}$ 
such that $\fpr_{\V(\Gamma)}(g) < \alpha$ for every $g\in G\setminus\{1\}$.
In particular, $\fxr_\Omega(G) \le \alpha$, thus proving Theorem~\ref{thm:1}.


\begin{thebibliography}{SK}

\bibitem{Bab}
L.\ Babai, On the order of uniprimitive permutation groups, 
{\em Ann.\ of Math.} {\bf 113} (1981), 553--568.

\bibitem{Babai1} L.~Babai, On the automorphism groups of strongly regular graphs I, \textit{ITCS'14—Proceedings of the 2014 Conference on Innovations in Theoretical Computer Science}, 359--368, ACM, New York, 2014.

\bibitem{Babai2}L.~Babai, On the automorphism groups of strongly regular graphs II, \textit{J. Algebra} \textbf{421} (2015), 560--578. 

\bibitem{Babai3}L.~Babai,
Graph Isomorphism in Quasipolynomial Time,
{\em arXiv:1512.03547v2}, \url{https://arxiv.org/abs/1512.03547}.

\bibitem{Tim1}T.~Burness, Fixed point ratios in actions of finite classical groups I, \textit{J.~Algebra} \textbf{309} (2007), 69--79.









\bibitem{CHKM}
M.\ Conder, A. Hujdurovi\'{v}, K.\ Kutnar, D.\ Maru\v{s}i\v{c}, Symmetric cubic graphs via rigid cells, 
{\em J.\ Algebraic Comb.} (2020). https://doi.org/10.1007/s10801-020-00946-3
\href{https://doi.org/10.1007/s10801-020-00946-3}{https://doi.org/10.1007/s10801-020-00946-3}.

\bibitem{ConTuck}
M.\ Conder, T.\ Tucker,
Motion and distinguishing number two, {\em Ars Math. Contemp.} {\bf 4} (2011), 63--72.

\bibitem{Gard} 
A. Gardiner, Arc-transitivity in graphs, 
{\em Quart. J. Math. Oxford} \textbf{24} (1973), 399--407.

\bibitem{GiuMor1}  
M.\ Giudici and L.\ Morgan, A class of semiprimitive groups that are graph-restrictive, 
{\em Bull.\ Lond.\ Math.\ Soc.} {\bf 46} (2014), 1226--1236.

\bibitem{GiuMor2}  
M.\ Giudici and L.\ Morgan, On locally semiprimitive graphs and a theorem of Weiss, {\em J.\ Algebra}
{\bf 427} (2015), 104--117.

\bibitem{GurMag}
R.\ Guralnick, K.\ Magaard,
On the minimal degree of a primitive permutation group,
{\em J.\ Algebra} {\bf 207} (1998), 127--145.

\bibitem{DonHig}
 D.\ G.\ Higman,
Intersection matrices for finite permutation groups,
J.\ Algebra {\bf 6} (1967), 22--42.

\bibitem{KutMar}
K.\ Kutnar, D.\ Maru\v{s}i\v{c},
Odd extensions of transitive groups via symmetric graphs,
{\em J.\ Combin.\ Theory, Ser.\ B} {\bf 136}  (2019), 170--192.

\bibitem{La} R. Lawther, M. W. Liebeck, G. M. Seitz, Fixed point ratios in actions of finite exceptional
groups of Lie type, \textit{Pacific Journal of Mathematics} \textbf{205} (2002), 393--464.

%

\bibitem{LS} M.\ Liebeck, J.\ Saxl, Minimal degrees of primitive permutation groups, with an application to monodromy groups of covers of Riemann surfuces, \textit{Proc. London Math. Soc. (3)} \textbf{63} (1991), 266--314.

  \bibitem{CovRef} A.~Malni\v c, R.~Nedela and M.~\v Skoviera,
                   Lifting Graph Automorphisms by Voltage Assignments,
                   {\em Europ.~J.~Combin.}, {\bf 21} (2000), 927--947.

  \bibitem{ElAbCov} A.~Malni\v c, D.~Maru\v si\v c and P.~Poto\v cnik,
                    Elementary abelian covers of graphs,
                    {\em J.\ of Alg. Combin.} {\bf 20} (2004), 71--97.
                    

\bibitem{PSV} 
P. Poto\v{c}nik, P. Spiga and G. Verret, On graph-restrictive permutation groups, 
{\em J. Comb. Theory, Ser. B} \textbf{102} (2012), 820--831.

\bibitem{PotSpiFixicity1}
P. Poto\v{c}nik, P.\ Spiga,
On the number of fixed points of automorphisms of vertex-transitive graphs,
\href{https://arxiv.org/abs/1909.05456}{arXiv:1909.05456 [math.CO]}.


\bibitem{Pqp} 
C.E. Praeger, Imprimitive symmetric graphs, {\em Ars Combin.} {\bf 19 A} (1985), 149--163.





\bibitem{Rot} J.\ J.\ Rotman, An Introduction to The Theory of Finite Groups (4th edition), {\em Graduate Texts in Mathematics} {\bf 148}, Springer-Verlag (1995).

\bibitem{SaxSha}
J.\ Saxl, A.\ Shalev, 
The Fixity of Permutation Groups,
{\em J.\ Algebra} {\bf 174}, (1995) 1122--1140.

\bibitem{CGT} 
P. Spiga,
An application of the Local $C(G,T)$ Theorem to a conjecture of Weiss,
{\em Bull.\ London Math.\ Soc.} {\bf 48} (2016) 12--18.

\bibitem{twist} 
P. Spiga,
 On G-locally primitive graphs of locally twisted wreath type and a conjecture of Weiss,
 {\em J.\ Combin.\ Theory Ser. A} {\bf 118} (2011),  2257--2260.
 
\bibitem{tutte} Tutte,~W.~T.: A family of cubical graphs. Proc. Cambridge Philos. Soc \textbf{43}, 459--474 (1947).

\bibitem{Verret} 
G. Verret, On the order of arc-stabilizers in arc-transitive graphs, 
{\em Bull.\ Aust.\ Math.\ Soc.} {\bf 80}  (2009),  498--505.


\bibitem{Verret2} 
G. Verret, On the order of arc-stabilizers in arc-transitive graphs, II, 
{\em Bull.\ Aust.\ Math.\ Soc.} {\bf 87} (2013), 441--447. 

\bibitem{weissconj} 
R.~Weiss, $s$-transitive graphs, 
{\em Colloq.\ Math.\ Soc.\ Janos Bolyai} {\bf 25} (1978), 827--847.

\bibitem{weissp} 
R.~Weiss, An application of $p$-factorization methods to symmetric graphs, 
{\em Math.\ Proc.\ Comb.\ Phil.\ Soc.} {\bf 85} (1979), 43--48.

\bibitem{dt} 
R.~Weiss,  Graphs which are locally Grassmann, 
{\em Math.\ Ann.} {\bf 297} (1993), 325--334.


















\end{thebibliography}
\end{document}